\documentclass[11pt]{amsart}
\usepackage{latexsym}
\usepackage{amsfonts}

\setlength{\textwidth}{150mm}
\setlength{\textheight}{230mm}
\setlength{\oddsidemargin}{.25in}
\setlength{\evensidemargin}{.25in}
\setlength{\topmargin}{-0.2cm}
\setlength{\parskip}{.05in}
\setlength{\hoffset}{-0.4cm}
\setlength{\headheight}{12pt}
\setlength{\headsep}{25pt}

\newcommand{\field}[1]{\mathbb{#1}}

\newcommand{\K}{\field{K}}

\newtheorem{defi}{Definition}

\newtheorem{lem}[defi]{Lemma}
\newtheorem{theo}[defi]{Theorem}
\newtheorem{co}[defi]{Corollary}

\newtheorem{re}[defi]{Remark}

\font\tenmsy=msbm10

\def\Bbb#1{\hbox{\tenmsy#1}} 
\setcounter{section}{0}


\title[Elimination ideals and B\'ezout relations]{Elimination ideals and B\'ezout relations} \makeatletter

\@addtoreset{equation}{section}
\makeatother
\author{Andre Galligo \& Zbigniew Jelonek}
\address[A. Galligo]{Universit\'e C\^ote d'Azur, LJAD, INRIA\\
France}
\email{galligo@unice.fr}

\address[Z. Jelonek]{Instytut Matematyczny PAN\\
\'Sniadeckich 8, 00-656 Warszawa\\
Poland}
\email{najelone@cyf-kr.edu.pl}

\keywords{Nullstellensatz, polynomials,  elimination, affine variety}

\subjclass{14 D 06, 14 Q 20.}

\date{}

\begin{document}

\maketitle

\begin{abstract}
Let $k$ be an infinite field and $I\subset k [x_1, \ldots ,x_n]$ be a non-zero ideal such that dim $V(I)=q\ge 0$.
Denote by  $(f_1, \ldots, f_s)$ a set of generators of $I$. 
One can  see that in the set $I\cap k [x_{1},...,x_{q+1}]$ there exist  non-zero polynomials,
depending only on these $q+1$ variables. 
We aim to bound the minimal degree of the polynomials of this type, and of a B\'ezout (i.e. membership) relation  expressing
such a polynomial as a combination of the $f_i$. In particular we show that if $\deg
f_i=d_i$ where $d_1\ge d_2...\ge d_s,$ 
 then there exist  a non-zero
polynomial $\phi (x)\in k[x_{1},...,x_{q+1}]\cap I$, such that $ \deg \phi\le d_s\prod^{n-q-1}_{i=1} d_i.$
\end{abstract}

\bibliographystyle{alpha}

\section{Introduction}
Let $I\subset k [x_1,...,x_n]$ be a non-zero ideal such that dim $V(I)=q\ge 0$. 
Using Hilbert Nullstellensatz we can easily see, that in the elimination ideal $I\cap k [x_1,...,x_{q+1}]$ there exist  non-zero polynomials. 
It is interesting to know the minimal degree of the polynomials in this ideal.  
Here,  performing a generic change of coordinates, and continuing the approach presented in \cite{jel}, 
we get a sharp estimate for the degree of such a minimal polynomial (and also for a corresponding generalized Bezout identity),
in terms of the degrees of generators of the ideal $I$.
Then,  using a deformation arguments we solve the  stated problem. We show that if $\deg
f_i=d_i$ where $d_1\ge d_2...\ge d_s,$ 
 then there exist  polynomials $g_{j}\in k[x_1,\dots,x_n]$  and  a non-zero
polynomial $\phi (x)\in k[x_{1},...,x_{q+1}]$ such that

(a) $ \deg g_{j}f_j\le d_s\prod^{n-q-1}_{i=1} d_i,$

(b) $\phi(x)=\sum^{s}_{j=1} g_{j}f_j.$ 

Note that our result works also in the case dim $V(I)=-1$ (i.e. in the case when $V(I)=\emptyset$) if we put $k[x_0]:=k$ (however our result in this case is a little bit worse than these in  \cite{jel}, \cite{kol}). Hence, from this point of view, we can treat our result as a generalization of the Effective Nullstellensatz.

\section{Main Result}

In this section we present a  geometric construction and establish degree bounds, relying on generic changes of coordinates.
Let us recall (see \cite{jel}) two important tools that we will use in the proof of the main theorem of this section.

\begin{theo}(Perron Theorem)\label{perron}
Let $\Bbb L$ be a field and let $Q_1,\dots,Q_{n+1}\in \Bbb L
[x_1,\dots,x_n]$ be non-constant polynomials with $\deg Q_i=d_i$.
 If the mapping
$Q=(Q_1,\dots,Q_{n+1}) : \Bbb L^n \to \Bbb L^{n+1}$ is generically
finite, then there exists a  non-zero polynomial
$W(T_1,\dots,T_{n+1})\in \Bbb L [T_1,\dots,T_{n+1}]$ such that

(a) $W(Q_1,\dots,Q_{n+1})=0$,

(b) {\rm deg} $W(T_1^{d_1},T_2^{d_2},\dots, T_{n+1}^{d_{n+1}})\le
\prod^{n+1}_{j=1} d_j.$
\end{theo}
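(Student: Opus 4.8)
The plan is to separate the (automatic) existence of a relation from the (substantive) weighted-degree bound, and to devote essentially all the work to the latter. Since $Q$ maps the $n$-dimensional space $\mathbb{L}^n$ into $\mathbb{L}^{n+1}$, the $n+1$ polynomials $Q_1,\dots,Q_{n+1}$ are necessarily algebraically dependent over $\mathbb{L}$, so a nonzero relation $W(Q_1,\dots,Q_{n+1})=0$ exists for trivial reasons. The generic-finiteness hypothesis enters only to make this relation essentially unique and the count sharp: it says $\mathrm{trdeg}_{\mathbb{L}}\,\mathbb{L}(Q_1,\dots,Q_{n+1})=n$, i.e. the image $\overline{Q(\mathbb{L}^n)}$ is an irreducible hypersurface in $\mathbb{L}^{n+1}$, whose ideal is the principal prime generated by an irreducible $W$. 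Thus the real content of the theorem is entirely (b). Writing $\deg_w$ for the degree with weight $d_i$ attached to $T_i$, one has $\deg_w W=\deg W(T_1^{d_1},\dots,T_{n+1}^{d_{n+1}})$, and the goal is $\deg_w W\le D:=\prod_{j=1}^{n+1} d_j$. I would first record why a naive approach fails: the map $W\mapsto W(Q_1,\dots,Q_{n+1})$ sends the space of $W$ with $\deg_w W\le D$ (of dimension equal to the number of lattice points of the simplex $\sum_i\alpha_i d_i\le D$) into $\mathbb{L}[x]_{\le D}$, but for large $d_i$ the source is smaller than the target by roughly a factor $n+1$, so pure monomial counting does not force a kernel element.

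To get the sharp bound I would pass to a projective/elimination picture. Homogenize each $Q_i$ with a variable $x_0$ to a form $Q_i^{h}$ of degree $d_i$ and consider the rational map $\mathbb{P}^n\dashrightarrow\mathbb{P}(d_1,\dots,d_{n+1})$, $[x_0:\dots:x_n]\mapsto[Q_1^{h}:\dots:Q_{n+1}^{h}]$, into weighted projective space with $T_i$ of weight $d_i$. The closure of its image is a weighted-homogeneous hypersurface $V(\widetilde W)$, whose defining $\widetilde W$ dehomogenizes to the affine relation $W$ with the same weighted degree. The estimate then becomes the statement that the weighted degree of this image hypersurface is at most $\prod d_j$. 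I would obtain this by expressing $\deg_w\widetilde W$, up to the finite degree of $Q$ onto its image, as an intersection number on $\mathbb{P}^n$ pulled back through $Q$; since each $Q_i^{h}$ has degree $d_i$, a Bézout/projection-formula estimate bounds this number by $\prod d_j$.

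Concretely this intersection count can be realized by iterated resultants: eliminate $x_1,\dots,x_n$ from the system $\{T_i-Q_i(x)=0\}$ one variable at a time, each step replacing a pair of polynomials by their $\mathrm{Res}$ in the current variable. The resultant of polynomials of degrees $a$ and $b$ has degree bounded by $ab$ in the remaining data, so the degrees multiply through the $n$ elimination steps and, with the weights $d_i$ carried along, telescope to $\prod d_j$; the surviving polynomial in the $T_i$ is then a nonzero multiple of $W$, from which the desired irreducible $W$ is extracted as a factor.

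The step I expect to be the main obstacle is the control of the contribution at infinity. The bound $\prod d_j$ is the intersection number one reads off from the top-degree forms $\bar Q_i$, but these leading forms need not themselves define a generically finite map, so the image can acquire spurious components supported over $x_0=0$ and, correspondingly, the iterated resultants can drop in degree or vanish identically. This is exactly where generic finiteness of $Q$ is indispensable: after a generic linear change of the $x$-coordinates one may assume each $Q_i$ is, up to a unit, monic of degree $d_i$ in $x_n$, which keeps every resultant nonvanishing, while the hypothesis guarantees that the elimination ideal is a genuine nonzero principal ideal so that no excess intersection at infinity inflates the weighted degree past $\prod d_j$. Verifying that this genericity is achievable and that the weighted bookkeeping at infinity is sharp is the technical heart of the argument; the examples $x\mapsto(x^{2},x)$ and $x\mapsto(x^{3},x^{2})$, where $\deg_w W=d_1d_2$ is attained, show the bound cannot be improved.
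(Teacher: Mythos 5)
First, a point of reference: the paper does not prove this theorem at all --- it is recalled verbatim from \cite{jel} as a known tool, so there is no internal proof to compare yours against. Judged on its own, your text is a proof plan whose easy parts are correct and whose hard part is missing. The reductions you make are fine: algebraic dependence of $n+1$ polynomials in $n$ variables is automatic, generic finiteness makes the closure of the image an irreducible hypersurface $V(W)$ with $W$ determined up to scalar, and the entire content of the theorem is the weighted bound $\deg W(T_1^{d_1},\dots,T_{n+1}^{d_{n+1}})\le\prod_{j=1}^{n+1}d_j$.

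The gap is that neither mechanism you propose for that bound actually delivers it. The iterated pairwise resultant does not ``telescope to $\prod d_j$'': eliminating $x_1$ from the $n+1$ polynomials $T_i-Q_i$ produces $n$ polynomials of degrees up to $d_id_j$, eliminating $x_2$ from those produces degrees up to $d_id_jd_kd_l$, and after $n$ steps the bound is a product of $2^n$ of the $d_i$ (with repetition), which is doubly exponential in $n$ and far above $\prod d_j$; iterated resultants also routinely acquire extraneous factors or vanish identically, so this route needs to be abandoned rather than repaired. The weighted-projective, intersection-theoretic route is the right idea in spirit (and is close to how \cite{jel} argues), but you explicitly defer the one step that constitutes the proof, namely showing that no excess component at infinity distorts the count. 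A workable concrete version of your idea is this: for generic $a_i,b_i$ the number $\deg W(T_1^{d_1},\dots,T_{n+1}^{d_{n+1}})$ equals the number of roots in $t$ of $W\bigl((a_1t+b_1)^{d_1},\dots,(a_{n+1}t+b_{n+1})^{d_{n+1}}\bigr)$, and for generic parameters each such root lifts to at least one solution of the system $Q_i(x)=(a_it+b_i)^{d_i}$, $i=1,\dots,n+1$, which consists of $n+1$ equations of degrees $d_1,\dots,d_{n+1}$ in the $n+1$ unknowns $(x,t)$; one must then prove that this system has only finitely many solutions for generic parameters --- this is where generic finiteness of $Q$ is genuinely used, and it is precisely the step you have not supplied --- before the affine B\'ezout inequality yields the bound $\prod d_j$. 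As written, part (b) is not proved.
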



\begin{lem}\label{lemat}
Let $\Bbb K$ be an algebraic closed field and let $k\subset \Bbb K$ be its  infinite subfield. Let
$X\subset \Bbb K^m$ be an affine algebraic variety of dimension $n.$
For sufficiently general numbers $a_{ij}\in k$ the mapping
$$\pi: X\ni (x_1,\dots,x_m)\to \Big(\sum_{j=1}^m a_{1j}x_j,
\sum_{j=2}^m a_{2j}x_j,\dots,\sum_{j=n}^m a_{1j}x_j\Big)\in \Bbb
K^n$$ is finite. $\Box$
\end{lem}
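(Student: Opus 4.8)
The plan is to exhibit a nonempty Zariski-open set of admissible coefficient tuples $(a_{ij})$ for which $\pi|_X$ is finite, using the classical ``center of projection'' criterion and then reducing finiteness to properness. First I would pass to the projective closure $\overline{X}\subset\mathbb{P}^m$ of $X$, write $H_\infty=\mathbb{P}^m\setminus\mathbb{K}^m$ for the hyperplane at infinity, and set $X_\infty=\overline{X}\cap H_\infty$. Since every irreducible component of $\overline{X}$ is the closure of a component of $X$ and therefore meets the affine chart, no component of $\overline{X}$ lies in $H_\infty$; cutting an $n$-dimensional variety by a hyperplane containing none of its components gives $\dim X_\infty\le n-1$.

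Next I would read $\pi$ projectively. Writing $A=(a_{ij})$ for the staircase matrix (with $a_{ij}=0$ for $j<i$), the map $\pi$ extends to a rational map $\overline{\pi}$ from $\mathbb{P}^m$ to $\mathbb{P}^n$ whose indeterminacy locus is the linear subspace $L=\mathbb{P}(\ker A)\subset H_\infty$, of dimension $m-n-1$ when $A$ has full rank $n$. The crucial point is that if $L\cap X_\infty=\emptyset$, then $\overline{\pi}$ restricts to an everywhere-defined morphism $g\colon\overline{X}\to\mathbb{P}^n$; this $g$ is proper because $\overline{X}$ is complete, and one checks that $g^{-1}(\mathbb{K}^n)=X$, since a point of $\overline{X}$ maps into the affine chart exactly when it already lies in it. Hence $\pi|_X$ arises from the proper morphism $g$ by base change to the affine chart $\mathbb{K}^n\subset\mathbb{P}^n$, so $\pi|_X$ is proper; being in addition a morphism of affine varieties, it is finite. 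Everything therefore reduces to the genericity statement that a sufficiently general staircase $A$ satisfies $L\cap X_\infty=\emptyset$, i.e. $Av\neq 0$ for every $0\neq v$ with $[v]\in X_\infty$.

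This genericity step is the main obstacle, and its subtlety comes entirely from the staircase shape of $A$: the naive dimension count for a full matrix is not available, because many entries are forced to vanish. I would form the incidence variety $\mathcal{I}=\{([v],A)\in X_\infty\times\mathbb{A}^N : Av=0\}$, where $N$ is the number of free entries of $A$, and bound $\dim\mathcal{I}$ by projecting onto $X_\infty$. Over a point $[v]$ the fibre is a linear subspace of $\mathbb{A}^N$ cut out by the equations $\sum_{j\ge i}a_{ij}v_j=0$; these involve pairwise disjoint blocks of variables, so the codimension of the fibre is exactly $\min(n,p(v))$, where $p(v)=\max\{j:v_j\neq 0\}$. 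Stratifying $X_\infty$ by the value of $p(v)$ and noting that the stratum with $p(v)=p$ sits inside the projective subspace spanned by $e_1,\dots,e_p$, hence has dimension $\le p-1$, I find that each stratum contributes at most $(p-1)+N-p=N-1$ when $p<n$ and at most $(n-1)+N-n=N-1$ when $p\ge n$. Thus $\dim\mathcal{I}\le N-1<N$, so the projection $\mathcal{I}\to\mathbb{A}^N$, which is closed because $X_\infty$ is complete, is not dominant. Its image is then a proper closed subset, and every $A$ in the complement gives $L\cap X_\infty=\emptyset$, producing the required open dense set of good coefficients and completing the proof.
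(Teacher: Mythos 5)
Your proof is correct, and it supplies an argument that the paper itself omits: Lemma \ref{lemat} is quoted there as a known tool from \cite{jel} and closed with a $\Box$ without proof, so there is nothing in the paper to compare against except the citation. Your route (pass to the projective closure, observe that finiteness of $\pi|_X$ follows from properness of the extended morphism once the center of projection $L=\Bbb P(\ker A)\subset H_\infty$ misses $X_\infty$, then prove genericity by an incidence-variety dimension count) is exactly the classical Noether-normalization-style argument, and it is essentially the one used in the cited reference. The one genuinely delicate point, which you identify and handle correctly, is the triangular shape of $A$: the $i$-th equation $\sum_{j\ge i}a_{ij}v_j=0$ is nontrivial only when $p(v)\ge i$, but the equations live in pairwise disjoint blocks of coordinates, and stratifying $X_\infty$ by $p(v)$ (the stratum with $p(v)=p$ lying in a $\Bbb P^{p-1}$) gives the uniform bound $\dim\mathcal I\le N-1$ in both cases $p<n$ and $p\ge n$. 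Two small points you leave implicit and should state: the image of $\mathcal I$ in $\Bbb A^N$ is a proper closed subset of the space of \emph{coefficient tuples over} $\Bbb K$, and one still needs a point of its complement with coordinates in $k$ — this is where the hypothesis that $k$ is infinite enters, via density of $k^N$ in $\Bbb A^N(\Bbb K)$; and the final step uses the standard fact that a morphism of affine varieties which is proper is finite. Neither is a gap, but both deserve a sentence.
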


In the sequel for a given ideal $I\subset k[x_1,...,x_n]$ by $V(I)$ we mean the set of algebraic zeros of $I$, i.e., 
the zeroes of $I$ in $\Bbb K^n$, where $\Bbb K$ is an algebraic closure of $k.$
Now we can formulate our first main result:

\begin{theo}\label{skzera}
Let $k$ be an infinite field and let $f_1,\dots,f_s\in k[x_1,\dots,x_n]$ be polynomials such  $\deg
f_i=d_i$ where $d_1\ge d_2...\ge d_s.$
Assume that $I=(f_1,\dots,f_s)\in k[x_1,\dots,x_n]$ is a non-zero  ideal, such that $V(I)$ has  dimension $q\ge 0$.   If we take a sufficiently
general system of coordinates $(x_1,\dots,x_n)$, then there exist
polynomials $g_{j}\in k[x_1,\dots,x_n]$  and  a non-zero
polynomial $\phi (x)\in k[x_{1},...,x_{q+1}]$ such that

(a) $ \deg g_{j}f_j\le d_s\prod^{n-q-1}_{i=1} d_i,$

(b) $\phi(x)=\sum^{s}_{j=1} g_{j}f_j.$ 

\end{theo}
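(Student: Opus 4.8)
The plan is to reduce the problem to a situation where Perron's Theorem (Theorem~\ref{perron}) directly applies, by using a generic linear change of coordinates to arrange the geometry favorably. Here is the strategy in detail.

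\medskip

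\textbf{Step 1: Choosing good coordinates via the projection lemma.}
First I would apply Lemma~\ref{lemat} to the variety $V(I)\subset \Bbb K^n$, which has dimension $q$. The lemma guarantees that for a sufficiently general system of linear coordinates, the projection
$$\pi: V(I)\ni (x_1,\dots,x_n)\mapsto (x_1,\dots,x_q)\in \Bbb K^q$$
onto the first $q$ coordinates is finite. After such a generic linear change of coordinates (which does not increase any of the degrees $d_i$), the polynomials $f_1,\dots,f_s$ still generate $I$ and still have degrees $d_1\ge\cdots\ge d_s$. The point of finiteness is that it controls the behavior of $V(I)$ ``at infinity'' in the extra variables $x_{q+2},\dots,x_n$, and in particular it should make a suitable subfamily of the $f_i$ behave like a generically finite map after we fix the first $q+1$ coordinates.

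\medskip

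\textbf{Step 2: Constructing the Perron map.}
The key idea is to work over the field $\Bbb L = k(x_1,\dots,x_{q+1})$ (the function field in the first $q+1$ variables), viewing $f_1,\dots,f_s$ as polynomials in the remaining $n-q-1$ variables $x_{q+2},\dots,x_n$ with coefficients in $\Bbb L$. I would select an appropriate collection of these polynomials — say $f_1,\dots,f_{n-q-1}$ together with a generic linear combination producing a degree-$d_s$ polynomial, or more naturally the subfamily whose degree product is $d_s\prod_{i=1}^{n-q-1} d_i$ — so that the induced map
$$Q = (f_1,\dots,f_{n-q-1}, L) : \Bbb L^{\,n-q-1}\to \Bbb L^{\,n-q}$$
is generically finite, where $L$ is chosen of degree $d_s$. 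The finiteness of $\pi$ from Step~1 is exactly what forces this map to be generically finite: since $V(I)$ maps finitely to the $x_1,\dots,x_q$ space, the fibers are dimension zero, so after specializing the first $q+1$ coordinates the remaining equations cut out finitely many points. Perron's Theorem then yields a nonzero polynomial relation $W(Q_1,\dots,Q_{n-q})=0$ with the degree bound from part~(b) of Theorem~\ref{perron}, namely the product of the degrees of the chosen generators, which equals $d_s\prod_{i=1}^{n-q-1} d_i$.

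\medskip

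\textbf{Step 3: Extracting $\phi$ and the B\'ezout relation.}
Expanding $W(f_1,\dots,f_{n-q-1},L)=0$ and clearing denominators (the coefficients lie in $\Bbb L = k(x_1,\dots,x_{q+1})$) produces an identity in $k[x_1,\dots,x_n]$. Collecting the constant term of $W$ in its last argument — or more precisely isolating the part of the expansion that survives when $f_1,\dots,f_{n-q-1},L$ are all set to their generic values — should leave a nonzero polynomial $\phi(x_1,\dots,x_{q+1})$ depending only on the first $q+1$ variables, together with an explicit expression $\phi = \sum_{j=1}^s g_j f_j$ where each $g_j$ is built from the partial expansion of $W$. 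The degree bound~(a), $\deg(g_j f_j)\le d_s\prod_{i=1}^{n-q-1} d_i$, follows from the weighted-degree estimate~(b) of Perron's Theorem, since that bound is precisely a bound on $\deg W$ after the substitution $T_i\mapsto T_i^{d_i}$, i.e., on the degree of $W(f_1,\dots,f_{n-q-1},L)$ as a polynomial in the $x_i$.

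\medskip

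The main obstacle I anticipate is Step~2: verifying that the map $Q$ over $\Bbb L$ is genuinely generically finite (not merely dominant or of the wrong dimension), and choosing the right subfamily of generators and the right auxiliary polynomial $L$ of degree $d_s$ so that Perron's bound comes out as exactly $d_s\prod_{i=1}^{n-q-1} d_i$ rather than a weaker product. This requires carefully using the finiteness from Lemma~\ref{lemat} to guarantee that fixing $x_1,\dots,x_{q+1}$ drops the dimension to zero, and tracking how the constant $d_s$ (the smallest degree) enters as the weight of the auxiliary coordinate $L$. A secondary subtlety is ensuring that the extracted $\phi$ is nonzero, which should follow from the fact that $W$ is not identically zero together with the generically finite hypothesis preventing degenerate cancellation, but this must be argued rather than assumed.
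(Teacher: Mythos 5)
There is a genuine gap in Step 2, and it is fatal to the degree bound (a). You propose to apply Perron's Theorem over the field $\Bbb L=k(x_1,\dots,x_{q+1})$, treating the generators as polynomials in the remaining variables $x_{q+2},\dots,x_n$. Perron's Theorem then bounds only the degree of $W(Q_1,\dots,Q_{n-q})$ \emph{in those last $n-q-1$ variables}; the coefficients of $W$ are arbitrary elements of $k(x_1,\dots,x_{q+1})$ with no bound whatsoever on their degrees or on the denominators you later clear. Consequently the identity $\phi=\sum g_jf_j$ you extract carries no control on $\deg\phi$ or on $\deg(g_jf_j)$ as polynomials in all $n$ variables, which is exactly what (a) requires. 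The paper avoids this by a different device: it introduces a genuinely new auxiliary variable $z$, forms $\Phi(x,z)=(F_1(x)z,\dots,F_{n-q}(x)z,x)$ with $F_{n-q}=f_s$ and $F_i$ generic combinations of $f_i,\dots,f_s$, composes with a generic linear projection to $\Bbb K^{n+1}$, and applies Perron over $\Bbb L=k(z)$ to $n+1$ polynomials in \emph{all} $n$ variables $x$. The weighted Perron bound then controls the full $x$-degree (the uncontrolled direction is only $z$, which is harmless because the final relation is read off from a single $z$-coefficient), yielding $\deg_x\le d_s\prod_{i=1}^{n-q-1}d_i$.

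A second, related gap is the nonvanishing of $\phi$. Extracting ``the constant term of $W$'' gives an element of $\Bbb L$ that may well be zero even though $W\neq 0$ and the map is generically finite; you acknowledge this but have no mechanism to rule it out. In the paper this is handled geometrically: one shows that the set of non-properness of $\Psi=\pi\circ\Phi$ is contained in the hypersurface $S=\{\phi'=0\}$, a cylinder over the image of $V(J)$ under $x\mapsto(x_1,\dots,x_{q+1})$, and then takes the minimal polynomial $P_z(T,Y)=\sum b_i(T)Y^{r-i}$ of $z$ over the image; finiteness outside $S$ forces the leading coefficient $b_0$ to vanish only inside $S$, hence to be a nonzero polynomial in $x_1,\dots,x_{q+1}$ alone, and reducing $P_z(\Psi,z)=0$ modulo $J$ produces the B\'ezout relation $b_0+\sum F_ig_i=0$. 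Without some substitute for this properness argument, your sketch establishes neither that $\phi\neq 0$ nor that it depends only on $x_1,\dots,x_{q+1}$ with the stated degree bound.
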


\noindent {\it Proof.}  Let $\Bbb K$ be the algebraic closure of $k.$ Take $F_{n-q}=f_s$ and  $F_i=\sum_{j=i}^{s} \alpha_{ij} f_j$ for $i=1,..., n-q-1,$ where $\alpha_{ij}\in k$ are sufficiently general.
Take $J=(F_1,...,F_{n-q}).$ Then $\deg F_{n-q}=d_s$ and deg $F_{i}=d_i$ for $i=1,...,n-q-1.$ Moreover, $V(J)$ has pure dimension $q$ 
and $J\subset I.$ The mapping
$$\Phi : \Bbb K^n\times  \Bbb K\ni (x,z)\to (F_1(x)z,\dots,F_{n-q}(x)z, x)\in  \Bbb K^{n-q}\times  \Bbb K^{n}$$
is a (non-closed)  embedding outside the set
$V(J)\times \K$. Take $\Gamma={\rm cl}(\Phi(\Bbb K^n\times
\Bbb K)).$ Let $\pi : \Gamma\to \Bbb K^{n+1}$ be a generic
projection defined over the field $k.$  Define $\Psi:=\pi\circ\Phi(x,z).$ By Lemma
\ref{lemat} we can assume that
$$\Psi=(\sum^{n-q}_{j=1}
\gamma_{1j} F_jz+l_1(x),\dots, \sum^{n-q}_{j=n-q} \gamma_{n-qj} F_jz+
l_{n-q}(x),l_{n-q+1}(x),..., l_{n+1}(x)),$$ where $l_1,\dots,l_{n+1}$ are  generic
linear form. In particular we can assume that $l_{n-q+i}$, $i=1,..,q+1$ is the
variable $x_i$ in a new generic system of coordinates (of $\Bbb K^n$).

Apply  Theorem \ref{perron} to  $\Bbb L=k(z),$ and to the
polynomials $\Psi_1,\dots,\Psi_{n+1}\in \Bbb L[x]$. Thus there exists a non-zero
polynomial $W(T_1,\dots,T_{n+1})\in \Bbb L [T_1,\dots,T_{n+1}]$
such that
$$W(\Psi_1,\dots,\Psi_{n+1})=0 \ {\rm and}\ \deg
W(T_1^{d_1},T_2^{d_2},\dots,T_k^{d_k},T_{k+1},\dots, T_{n+1})\le
d_s\prod^{n-q-1}_{j=1} d_j,$$ where $k=n-q.$ Since the coefficients of  $W$ are in
$k(z)$, there is a non-zero polynomial $\tilde{W}\in 
k[T_1,\dots,T_{n+1},Y]$ such that

(a) $\tilde{W}(\Psi_1(x,z),\dots,\Psi_{n+1}(x,z),z)=0,$

(b) ${\deg}_T \tilde{W}(T_1^{d_1},T_2^{d_2},\dots,T_k^{d_k},
T_{k+1},\dots, T_{n+1}, Y)\le d_s\prod^{n-q-1}_{j=1} d_j,$ where
${\deg}_T$ denotes the degree with respect to the variables
$T=(T_1,\dots,T_{n+1}).$

Note that the mapping $\Psi=(\Psi_1,\dots,\Psi_{n+1}): \Bbb K^n \times
\Bbb K\to \Bbb K^{n+1}$ is locally finite outside the set $V(J)\times \Bbb K$. 
Consider $\Bbb K^{n+1}$ as a product $\Bbb K^{n-q}\times \Bbb K^{q+1}$, and let us consider in this product 
coordinates $(y_{q+2},...,y_{n+1}, y_1,...,y_{q+1}).$ Hence $\Psi$ restricted to $V(J)\times \Bbb K$ coincides with the mapping:
$ (x,z)\mapsto (l_1(x),..., l_{n-q}(x), x_1,...,x_{q+1})$ (recall that we consider a new generic system of coordinates).
Let $\phi'=0$ describes the image of the projection 
$$\pi: V(J)\ni x\mapsto (x_{1},..., x_{{q+1}})\in {\Bbb K}^{q+1}.$$ Put $S=\{ T\in \Bbb K^{n+1} : \phi'(T)=0\}$. 
Hence $V(J)\times \Bbb K$ is contained in $\Psi^{-1}(S).$ Consequently the mapping
 $\Psi$ is proper outside the 
hypersurface $S$ and thus the set of non-properness of the mapping $\Psi$  is contained in the  $S.$ 

Since the mapping $\Psi$ is finite
outside $S$, for every  $H\in k [x_1,\dots,x_n, z]$ there is
a minimal polynomial $P_H(T,Y)\in k[T_1,\dots,T_{n+1}][Y]$
such that $P_H(\Psi_1,\dots,\Psi_{n+1},H)= \sum^r_{i=0}
b_i(\Psi_1,\dots,\Psi_{n+1})H^{r-i}=0$ and the coefficient $b_0$
satisfies $\{ T : b_0(T)=0\}\subset S$. In particular $b_0$ depends only on variables $x_1,..., x_{q+1}.$ Moreover, $P_H$ describes a hypersurface given by parametric 
equation $(\Psi_1,...,\Psi_{n+1},H)$ and by Gr\"obner base computation we see that we can assume $P_H(T,Y)\in k[T_1,\dots,T_{n+1}][Y]$.
Now set $H=z.$ 

We
have 
$${\rm deg}_T {P_z}(T_1^{d_1},T_2^{d_2},\dots,T_n^{d_n},
T_{n+1},Y)\le d_s\prod^{n-q-1}_{j=1} d_j$$ and consequently we obtain
the equality $b_0(x_{1},...,x_{q+1})+\sum_{i=1}^{n-q} F_i g_i=0,$ where deg $F_i g_i\le
 d_s\prod^{n-q-1}_{j=1} d_j$. Set $\phi=b_0$. By the construction the
polynomial $\phi$ has zeros only on the image of the projection 
$$\pi: V(J)\ni x\mapsto (x_{1},..., x_{{q+1}})\in {\Bbb K}^{q+1}.$$  $\Box$

\vspace{3mm}

\begin{re}
{\rm Simple application of the Bezout theorem shows that our bound on the degree of $\phi$ is sharp.}
\end{re}

\begin{co}
Let $k$ and $I$ and system of coordinates be as above. If $V(I)$ has pure dimension $q$ and $I$ has not embedded components, then there is a polynomial $\phi_1\in k [x_{1},...,x_{q+1}]$ which describes the image of the projection 
$$\pi: V(I)\ni x\mapsto (x_{1},..., x_{q+1})\in {\Bbb K}^{q+1}$$ such that  

(a) $\phi_1\in I$,

(b) $\deg \phi_1 \le d_s\prod^{n-q-1}_{i=1} d_i.$
\end{co}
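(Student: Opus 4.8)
The plan is to exhibit the required $\phi_1$ as the single generator of the elimination ideal and then read off all three assertions from it. Write $R=k[x_1,\dots,x_q]$, $B=k[x_1,\dots,x_{q+1}]$ and $A=k[x_1,\dots,x_n]/I$, and set $I_{q+1}=I\cap B=\ker(B\to A)$, so that $V(I_{q+1})=\overline{\pi(V(I))}$ by elimination theory. In the generic system of coordinates supplied by Theorem \ref{skzera}, Lemma \ref{lemat} applied to $X=V(I)$ makes the projection to $\Bbb K^q$ finite and dominant; algebraically this says that $R\hookrightarrow A$ is a finite injective extension (a Noether normalization of $A$), so in particular $\bar x_{q+1}\in A$ is integral over $R$.

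The crux is to prove that $I_{q+1}$ is principal; the rest is formal. First I would show that the hypotheses force $A$ to be torsion-free over $R$. Because $I$ has pure dimension $q$ and no embedded components, every associated prime $P$ of $A$ satisfies $\dim A/P=q=\dim R$, whence $P\cap R=(0)$ by finiteness of $R\hookrightarrow A$; thus $\mathrm{Ass}_R(A)=\{(0)\}$ and $A$ embeds into $A\otimes_R\kappa$, where $\kappa=\mathrm{Frac}(R)$. Let $\mu\in\kappa[x_{q+1}]$ be the monic minimal polynomial of $\bar x_{q+1}$; since $R$ is normal and $\bar x_{q+1}$ is integral, Gauss's lemma gives $\mu\in R[x_{q+1}]=B$. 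Torsion-freeness makes $C:=B/I_{q+1}$ embed into $C\otimes_R\kappa$, so $I_{q+1}$ is $R$-saturated and equals $\mu\kappa[x_{q+1}]\cap B$; dividing any element of this intersection by the monic polynomial $\mu$ inside $B$ forces the remainder to vanish, whence $I_{q+1}=(\mu)$. Setting $\phi_1=\mu$ yields $\phi_1\in I_{q+1}\subset I$, which is (a), and $V(\phi_1)=V(I_{q+1})=\overline{\pi(V(I))}$, so $\phi_1$ describes the image.

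For the degree bound (b) I would simply invoke Theorem \ref{skzera}: the polynomial $\phi$ it produces lies in $I\cap B=I_{q+1}=(\phi_1)$, so $\phi_1\mid\phi$ and therefore $\deg\phi_1\le\deg\phi\le d_s\prod_{i=1}^{n-q-1}d_i$.

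I expect the main obstacle to be exactly the principality of $I_{q+1}$, and more precisely the torsion-freeness of $A$ over the Noether normalization $R$. This is the one place where the no-embedded-components hypothesis is indispensable: an embedded prime of $A$ would contract to a nonzero prime of $R$, produce $R$-torsion in $C$, and the elimination ideal could then fail to be principal, so that a polynomial in $I$ describing the image would be forced to carry extra multiplicities and could violate the degree bound. A secondary point requiring care is that the generic coordinates of Theorem \ref{skzera} should simultaneously render $x_1,\dots,x_q$ a Noether normalization of $A$, but this is guaranteed by the same genericity already harnessed through Lemma \ref{lemat}.
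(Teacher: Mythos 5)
Your proof is correct, but it takes a genuinely different route from the paper's. The paper works directly with the polynomial $\phi$ produced by Theorem \ref{skzera}: since $\phi$ vanishes exactly on $\pi(V(J))\supseteq \pi(V(I))$ and $\pi(V(I))$ is a hypersurface $V(\psi)$ with $\psi\in k[x_1,\dots,x_{q+1}]$, one factors $\phi=\phi_1\phi_2$ over $k$, where $\phi_1$ collects the irreducible factors of $\phi$ dividing $\psi$ and $\phi_2$ vanishes on no component of $V(I)$; then for each primary component $I_j$ of $I$ (all of whose radicals are minimal primes of dimension $q$, by the purity and no-embedded-components hypotheses) the relation $\phi_1\phi_2\in I\subset I_j$ together with $\phi_2\notin\sqrt{I_j}$ forces $\phi_1\in I_j$, hence $\phi_1\in I$, and the degree bound is inherited from $\phi$. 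You instead prove the stronger structural fact that the elimination ideal $I\cap k[x_1,\dots,x_{q+1}]$ is principal, generated by the minimal polynomial $\mu$ of $\bar x_{q+1}$ over the Noether normalization $R=k[x_1,\dots,x_q]$ --- and this is exactly where you spend the hypothesis, via $\mathrm{Ass}_R(k[x]/I)=\{(0)\}$, i.e. torsion-freeness over $R$ --- and you then get the degree bound because $\mu$ must divide the $\phi$ of Theorem \ref{skzera}. Both arguments invoke the hypothesis at the analogous point (the paper to guarantee $\phi_2\notin\sqrt{I_j}$, you to guarantee that associated primes contract to $(0)$ in $R$). Your route costs more commutative algebra (contraction of associated primes along a finite extension, Gauss's lemma for monic divisors over a normal domain, and the additional genericity needed to make $x_1,\dots,x_q$ a Noether normalization, which is harmless) but buys a canonical choice of $\phi_1$ and the principality of the elimination ideal, from which divisibility and hence the degree bound are automatic; the paper's route is shorter given Theorem \ref{skzera} and needs only the defining property of a primary ideal.
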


\begin{proof}
The set $V(J)=q$ has pure dimension $q$. Consequently $\pi(V(J))$ and $\pi(V(I))$ are hypersurfaces. 
Moreover, by Gr\"obner bases computation the set $\pi(V(I))$ is described by a polynomial $\psi$ from $k[x_1,...,x_{q+1}].$ 
Let $\phi$ be a polynomial as above which vanishes exactly on   $\pi(V(J)).$  Let $\phi_1$ be a product of all irreducible  factors of $\phi$
(over the field $k$) which divides $\psi.$ Hence  $\phi=\phi_1\phi_2, \ \phi_1,\phi_2\in k[x_1,...,x_{q+1}]$, where  $\phi_2$ does not vanish on any component of $V(I).$ Let $I=\bigcap^r I_s$ be a primary decomposition of $I$. Consequently $\phi_1\in I_j$ for every $s$ (by properties of primary ideals) and consequently $\phi_1\in I.$ But $\phi_1$ describes the image of the projection 
$$\pi: V(I)\ni x\mapsto (x_{1},..., x_{q+1})\in {\Bbb K}^{q+1}.$$
\end{proof}

\section{A deformation argument}

In this section, we improve Theorem \ref{skzera} by releasing the necessity of a generic change of coordinates, so
conditions (a) and (b) will be satisfied in the initial  system of coordinates.

\begin{theo}\label{NewMainTh}
Let $k$ be an infinite field and let $f_1,\dots,f_s\in k[x_1,\dots,x_n]$ be polynomials such  $\deg
f_i=d_i$ where $d_1\ge d_2\ge...\ge d_s.$
Assume that $I=(f_1,\dots,f_s)\subset k[x_1,\dots,x_n]$ is a non zero ideal, such that $V(I)$ has  dimension $q\ge 0$.  
There exist polynomials $g_{j}\in k[x_1,\dots,x_n]$  and  a non-zero
polynomial $\phi \in k[x_{1},...,x_{q+1}]$ such that

(a) $ \deg g_{j}f_j\le d_s\prod^{n-q-1}_{i=1} d_i,$

(b) $\phi=\sum^{s}_{j=1} g_{j}f_j$. 

\end{theo}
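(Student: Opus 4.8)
The plan is to deduce Theorem \ref{NewMainTh} from Theorem \ref{skzera} by a deformation argument that removes the genericity of the coordinate change. Theorem \ref{skzera} already gives, after a \emph{generic} linear change of coordinates $x = A\cdot x'$ (with $A \in GL_n(k)$), polynomials $g_j'$ and a nonzero $\phi' \in k[x_1',\dots,x_{q+1}']$ with $\phi' = \sum_j g_j' f_j(A x')$ and $\deg g_j' f_j(Ax') \le d_s\prod_{i=1}^{n-q-1} d_i$. The difficulty is that transporting this identity back through $A^{-1}$ produces a $\phi$ that genuinely involves all $n$ variables, not just $x_1,\dots,x_{q+1}$, so a naive substitution destroys conclusion (a)/(b) in the original coordinates. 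I would therefore introduce the change of coordinates as an honest deformation parameter rather than a fixed generic choice.

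Concretely, I would work over the field $K = k(t)$ where $t = (t_{ij})$ is a collection of indeterminates, and consider the linear substitution $x_i \mapsto x_i + \sum_{j} t_{ij}\,\ell_{ij}(x)$ (or more simply a one-parameter family $A_t$ of matrices with $A_0 = \mathrm{Id}$ and $A_t$ generic for generic $t$) interpolating between the identity and a generic change of coordinates. Applying Theorem \ref{skzera} over the field $k(t)$ — which is infinite, so the hypotheses are met — yields an identity $\phi(x,t) = \sum_j g_j(x,t)\, f_j(x)$ valid over $k(t)$, with $\phi(\cdot,t)$ nonzero in $k(t)[x_1,\dots,x_{q+1}]$ and the degree bound $\deg_x g_j f_j \le d_s\prod_{i=1}^{n-q-1} d_i$ holding uniformly in $t$. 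The key point is that the variables $x_1,\dots,x_{q+1}$ occurring in $\phi$ are the \emph{original} coordinates: the genericity has been pushed entirely into the transcendental parameters $t$, and the elimination property ``$\phi$ depends only on $x_1,\dots,x_{q+1}$'' is preserved because the projection onto the first $q+1$ coordinates is fixed while only the auxiliary directions are perturbed.

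The main obstacle, and the step requiring care, will be the \emph{specialization} $t \mapsto t_0 \in k^N$. One clears denominators so that $\phi$, the $g_j$, and the coefficients all lie in $k[t][x]$, and then one must choose a specialization value $t_0$ (possible since $k$ is infinite) for which $\phi(x,t_0)$ remains \emph{nonzero} in $k[x_1,\dots,x_{q+1}]$ while the degree bounds survive. Nonvanishing under specialization is the crux: one needs to argue that the leading data of $\phi$ — for instance a fixed nonzero coefficient of some monomial in $x_1,\dots,x_{q+1}$, viewed as a polynomial in $t$ — is not identically zero, so that it avoids the hypersurface cut out in $t$-space and survives specialization at a generic $k$-point. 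Since the degree bound is a uniform inequality it is automatically inherited, so the only genuine work is guaranteeing that the specialized $\phi$ does not collapse to zero; the infinitude of $k$ then furnishes the required $t_0$, and setting $\phi := \phi(x,t_0)$ and $g_j := g_j(x,t_0)$ completes the proof in the original system of coordinates.
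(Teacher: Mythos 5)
Your overall framework --- apply Theorem \ref{skzera} over the infinite field $k(t)$ with a $t$-parametrized change of coordinates $A_t$ satisfying $A_0=\mathrm{Id}$, clear denominators, and specialize --- is exactly the paper's starting point. But the specialization step, which you correctly identify as the crux, is where the argument breaks, and your proposed fix does not work. Theorem \ref{skzera} applied over $k(t)$ produces $\phi$ in the \emph{new} coordinates $X=A_tx$, i.e.\ $\phi\in k[t][X_1,\dots,X_{q+1}]$, and for its hypotheses to be met these $q+1$ new coordinates must themselves be generic linear forms: the proof of Theorem \ref{skzera} needs the projection of $V(J)$ onto them to be finite so that its image is a hypersurface. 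So your claim that ``the projection onto the first $q+1$ coordinates is fixed while only the auxiliary directions are perturbed'' cannot be arranged; if you fix those coordinates to be the original ones, there is no genericity left where it is needed, and the construction may fail outright (e.g.\ when $V(I)$ projects onto a set of codimension $\ge 2$ in the $(x_1,\dots,x_{q+1})$-plane). Consequently the only specialization that returns you to the original variables is $t=0$, not a generic $t_0\in k^N$; specializing at a generic $t_0$ merely reproduces Theorem \ref{skzera} in yet another generic coordinate system, which is not the content of Theorem \ref{NewMainTh}.

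At the special value $t=0$ the nonvanishing of $\phi$ is genuinely in doubt and cannot be obtained by a genericity argument: after clearing denominators the identity reads $b_0(X,t)=\sum_j G_j(X,t)\bar F_j(X,t)$ with $b_0(X,t)=t^p(\phi(x)+t\phi_1(x,t))$, and $p$ may be strictly positive. One cannot simply divide by $t^p$, since the individual $G_j$ need not be divisible by $t$. This is the missing idea, and it is where the paper does the real work: if setting $t=0$ annihilates the right-hand side, then $H=(G_1(x,0),\dots,G_{n-q}(x,0))$ is a syzygy of the $F_j$; lifting it to a syzygy $\bar H$ of the $\bar F_j$ with $\bar H\equiv H \pmod{t}$ and replacing each $G_j$ by $G_j-\bar H_j$ leaves the sum and the degree bounds unchanged while making every coefficient divisible by $t$, so one factor of $t$ can be cancelled and the step iterated until $p=0$. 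Without this syzygy reduction (or an equivalent device) your proof does not close.
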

\begin{proof}
We use  Theorem \ref{skzera}, but over the field  $\Bbb L=k(t)$.
We consider a new generic change of coordinates using generic values $a_{i,j}$ in the infinite field $k$, 
together with the inverse change of coordinates
$$  X_{i}=x_{i} + t \sum_{j=i+1}^n a_{i,j}x_j \; ; \; x_{i}=X_{i} + t \sum_{j=i+1}^n b_{i,j}(t) X_j,
$$ where $b_{i,j}(t)\in k[t].$

As in the proof of Theorem \ref{skzera}, we obtain some
polynomials $G_{j}\in  \Bbb L[X_1,\dots,X_n]$  and  a non-zero
polynomial $b_0 \in  \Bbb L[X_{n-q},...,X_{n}]$ such that, after chasing the denominators, 
$$  b_0(X,t)=\sum^{n-q}_{j=1} G_j(X,t) \bar{F_j}(X,t),$$ where $ b_0(X,t), G_j(X,t), \bar{F_j}(X,t)\in k[t][X_1,...,X_n].$

We cannot just  simplify this equality  by $t$ and then set $t=0$, because  we cannot  exclude the possibility
that there will be a  remaining factor $t^p$ in the left hand side with $p$ strictly positive.
To rule out this possibility, we need  to perform several reduction steps. 
Consider the sub-module $M=\{H(x)=(H_1(x), \dots, H_{n-q}(x)) \}$ of $k[x]^{n-q}$  formed by the relations (first syzygies)
between the polynomials $ F_1(x), \dots, F_{n-q}(x)$. To each element $H(x)$ in $M$ corresponds via the change of coordinates 
a relation $\bar{H}(X,t)$ between the polynomials $\bar{F}_1(X,t), \dots, \bar{F}_{n-q}(X,t)$, such that
$ \bar{H}(X,t)-{H}(X)$ is divisible by $t$.
Re-writing in $(x,t)$, we obtain that
$$  b_0(X,t)=\sum^{n-q}_{j=1} (G_j(X,t)- \bar{H_j}(X,t)) \bar{F_j}(X,t).
$$
We may assume that in the previous equality $b_0(X,t)$ has the form $ b_0(X,t)  = t^p (\phi(x) +t \phi_1(x,t))$
; notice that the $x-$ degree of $\phi(x)$ is bounded by the $X-$degree of $ b_0(X,t))$. 
Each reduction step will produce a similar
equality (with the same degree in $x$ bounds) but with a strictly smaller power $p$.

Assume $p>0$ and let $t=0$, we obtain a non trivial relation $ 0=\sum^{n-q}_{j=1} G_j(x,0) F_j(x)$, hence 
 $H=(G_1(x,0), \dots, G_{n-q}(x,0))$, a non trivial element of $M$. Notice that the $x-$ degree of $ G_j(x,0)$ is bounded by the $X-$degree of $G_j(X,t)$. 
 To which we associate its $\bar{H}$ as above with the same degree bound in $X$ (equivalently in $x$
 by linearity) and notice that
 now $\sum^{n-q}_{j=1} (G_j(X,t)- \bar{H_j}(X,t)) \bar{F_j}(X)$ vanishes for $t=0$, hence admits a factor $t$.
 We can simplify the two sides of the previous equality by $t$ and obtain
 
 $ t^{p-1} (\phi(x) +t \phi_1(x,t))= \sum^{n-q}_{j=1} (G_j(X,t)- \bar{H_j}(X,t)) \bar{F_j}(X,t).$ 

 \noindent After at most $p$ such reduction steps, we get rid of the initial factor $t^p$ and setting $t=0$, we obtain the announced equality with the 
 announced bounds. 
 \end{proof}

\vspace{20mm}

    \end{document}